\let\emptyset\varnothing
\newcommand{\Z}{\mathbb{Z}}
\newcommand{\N}{\mathbb{N}}
\newcommand{\R}{\mathbb{R}}
\newcommand{\Gr}{\mathcal{G}}
\newcommand{\eps}{\varepsilon}
\newcommand{\lift}[1]{#1^{\uparrow}}
\newcommand{\vect}[2]{#1^{\mathrm{vec},#2}}
\newcommand{\conv}[1]{\mathrm{Conv}(\Gr,#1)}
\newcommand{\filter}[1]{\mathrm{filter}(\Gr,#1)}
\newcommand{\equi}[1]{#1^{\mathrm{equi}}}
\newcommand{\arch}[1]{\mathcal{A}(#1)}
\newcommand{\FirstN}[1]{\llbracket #1 \rrbracket}
\newcommand{\Indicator}{{\mathds{1}}}
\newtheorem{theorem}{Theorem}[section]
\newtheorem{remark}[theorem]{Remark}
\newtheorem*{rem*}{Remark}
\newtheorem{definition}[theorem]{Definition}
\newtheorem{proposition}[theorem]{Proposition}
\title[Equivalence of approximation properties of CNNs and FNNs]
      {\texorpdfstring{Equivalence of approximation\\
                       by convolutional neural networks\\
                       and fully-connected networks}
                      {Equivalence of approximation
                       by convolutional neural networks
                       and fully-connected networks}}
\begin{document}
\date{}

\author{Philipp Petersen$^\dagger$}
\address[P.~Petersen]{Mathematical Institute,
University of Oxford,
Andrew Wiles Building, Oxford, UK}
\email{Philipp.Petersen@maths.ox.ac.uk}
\author{Felix Voigtlaender$^\dagger$}
\address[F.~Voigtlaender]{Lehrstuhl Wissenschaftliches Rechnen,
Katholische Universität Eichstätt--Ingolstadt,
Ostenstraße 26,
85072 Eichstätt,
Germany}
\email{felix@voigtlaender.xyz}
\thanks{$\dagger$ Both authors contributed equally to this work}


\subjclass[2010]{Primary: 41A25, Secondary: 44A35, 41A46}

\keywords{Neural networks,
convolutional neural networks,
function approximation, rate of convergence}

\begin{abstract}
Convolutional neural networks are the most widely used
type of neural networks in applications.
In mathematical analysis, however, mostly fully-connected networks are studied.
In this paper, we establish a connection between both network architectures.
Using this connection, we show that all upper and lower bounds
concerning approximation rates of {fully-connected} neural networks
for functions $f \in \mathcal{C}$---for an arbitrary function class
$\mathcal{C}$---translate to essentially the same bounds
concerning approximation rates of \emph{convolutional} neural networks for functions
$f \in \equi{\mathcal{C}}$, with the class $\equi{\mathcal{C}}$ consisting of all
translation equivariant functions whose first
coordinate belongs to $\mathcal{C}$.
All presented results consider exclusively the case of convolutional neural networks
without any pooling operation and with circular convolutions, i.e., not based on zero-padding.
\end{abstract}

\maketitle



\section{Introduction}

The recent overwhelming success of machine learning techniques such as
\emph{deep learning} \cite{Goodfellow-et-al-2016, LeCun2015DeepLearning, schmidhuber2015deep}
has prompted many theoretical works trying to provide a mathematical explanation
for this extraordinary performance.
One line of research focuses on analysing the underlying
computational architecture that is given by a neural network.
In the context of approximation theory, it is possible to
describe the capabilities of this architecture meaningfully.
First and foremost, the universal approximation theorem
(see \cite{Cybenko1989, Hornik1989universalApprox, PinkusUniversalApproximation})
shows that any continuous function on a compact domain can be approximated arbitrarily
well by neural networks.
Besides, more refined approximation results relate the size
(in terms of number of neurons or number of free parameters)
of an approximating neural network to its approximation fidelity;
see for instance \cite{Barron1993,Mhaskar1993, mhaskar-micchelli,PetV2018OptApproxReLU,
pinkus1999approximation,ShaCC2015provableAppDNN,YAROTSKY2017103}.
These results include upper bounds on the sufficient size of a network,
but also establish lower bounds on the necessary size of a network
which is required for certain approximation tasks.

While offering valuable insight into the functionality and capability of neural networks,
the practical relevance of these results is limited.
Indeed, all mentioned results consider so-called \emph{fully-connected neural networks} (FNNs).
In most applications, however, so-called \emph{convolutional neural networks} (CNNs),
\cite{LeCun}, are employed.

\medskip{}

We establish a transference result between CNNs and FNNs.
Concretely, we demonstrate that every FNN can be transformed into a CNN
with a comparable number of free parameters and vice versa.
This demonstrates that---at least from an approximation theoretical point of view---%
FNNs and CNNs can be considered equivalent.

\subsection{{Our contribution}}
\label{sub:SimplifiedResults}

Let us give a simplified but honest description of our results for the special setting of
CNNs acting on images.
In this setting, we think of the inputs of our networks as being square matrices,
i.e., living in $\R^{d \times d}$, $d \in \N$.


\subsubsection*{Convolutions and the relation to groups}
At least in the mathematical literature, the convolution on $\R^{d \times d}$ is typically defined by
\begin{equation}
  (X \ast Y)_j
  := \sum_{i\in \{0,\dots, d-1\}^2}
       X_i \, Y_{(j-i) \  \mathrm{ mod } \ d}
  \label{eq:IntroConvolution}
\end{equation}
for $X,Y \in  \R^{d \times d}$ and $j \in \{0,\dots, d-1\}^2$.
This convolution is induced by the group structure of $(\Z / d \Z)^2$.
Precisely, if we identify an ``image'' ${X = (X_{i,j})_{i,j \in \{0,\dots,d-1\}}}$ with the function
$F_X : (\Z / d \Z)^2 \to \R, (i + d \Z, j + d \Z) \! \mapsto \! X_{i,j}$, 
then we have ${F_X \ast F_Y = F_{X \ast Y}}$, where
\[
  (F \ast G)(g) = \!\! \sum_{h \in (\Z / d \Z)^2} \!\! F(h) \, G(g - h)
  \quad \text{for} \quad g \in (\Z / d \Z)^2 \text{ and } F,G : (\Z / d \Z)^2 \to \R.
\]
We remark that in the computer science literature, also other conventions for the convolution
are commonly used; for instance, one can use \emph{zero padding} instead of the periodic boundary handling
in Equation~\eqref{eq:IntroConvolution}.

In the main body of the paper, we will state our results for convolutions
stemming from general finite groups, which has the advantage that the results
apply to arbitrary input dimensions,
and thus also for sound signals or videos in addition to images.
Second, working on general groups simplifies the notation.

\subsubsection*{Fully connected and convolutional neural networks}
We define FNNs as certain functions that result from repeatedly applying affine-linear maps
and a non-linear componentwise operation.
More precisely, an FNN $\Phi$ with $L$ layers is of the form $\Phi(x) = (F_L \circ \cdots \circ F_1)(x)$,
where each $F_i$ is given by ${F_i (x) = \varrho (A_i \, x + b_i)}$; here, the non-linearity
$\varrho : \R \to \R$ is applied componentwise.
The number of \emph{active parameters} of an FNN is the number of active parameters
of the involved affine-linear maps; that is, the total number of nonzero entries of the matrices
$A_i$ and the bias vectors $b_i$.

For CNNs, we use the same definition as in \cite{YarotskyWasFirstAgain},
which formalizes the more intuitive description in \cite[Chapter 9.5, Equation (9.7)]{Goodfellow-et-al-2016}.
As for FNNs, a CNN $\Phi$ with $L$ layers is of the form $\Phi(X) = (F_L \circ \cdots \circ F_1)(X)$,
where the computation of each layer $F_\ell$ is as shown in Figure~\ref{fig:CNNExplanation}.
More formally, each layer takes as input a ``stack of images'' $X = (X_1,\dots,X_N)$,
where each image $X_i \in \R^{d \times d}$ is called an \emph{input channel}.
The output of the layer is again a ``stack of images'' $Y = (Y_1,\dots,Y_M)$
which is computed as follows:
\begin{enumerate}
  \item Several convolution kernels $c_1, \dots, c_k \in \R^{d \times d}$
        are applied to the individual input channels; this results in
        \[
          (V_{i,\ell})_{i \in \{1,\dots,k\}, \ell \in \{1,\dots,N\}}
          = (c_i \ast X_\ell)_{i \in \{1,\dots,k\}, \ell \in \{1,\dots,N\}}.
        \]

  \item Different linear combinations of the channels $V_{i,\ell}$ are formed;
        this results in the ``stack of images'' $(W_{j})_{j \in \{1,\dots,M\}}$, where
        \(
          W_j = \sum_{i = 1}^k
                  \sum_{\ell = 1}^N
                    A_{j,(i,\ell)}
                    V_{i,\ell}
        \)
        for certain coefficients $A_{j,(i,\ell)} \in \R$.
        \vspace{0.1cm}

  \item The individual channels $Y_j \in \R^{d \times d}$ of the output of the layer are given by
        $Y_j = \varrho (W_j + b_j)$, where $b_j \in \R$,
        and where the addition $W_j + b_j$ as well as the application
        of the \emph{activation function} $\varrho$ are componentwise.
\end{enumerate}
Overall, a CNN $\Phi$ thus computes a function
\(
  \Phi : (\R^{d \times d})^{C_{\mathrm{in}}} \to (\R^{d \times d})^{C_{\mathrm{out}}}
\),
where $C_{\mathrm{in}}$ and $C_{\mathrm{out}}$ denote the number of input or output channels,
respectively.

The number of \emph{active parameters} of a CNN is the total number of all non-zero entries
of the convolution kernels plus the total number of non-zero coefficients used
for forming the linear combination of the channels.

\begin{figure}[h]
  \begin{center}
    \includegraphics[width=\textwidth]{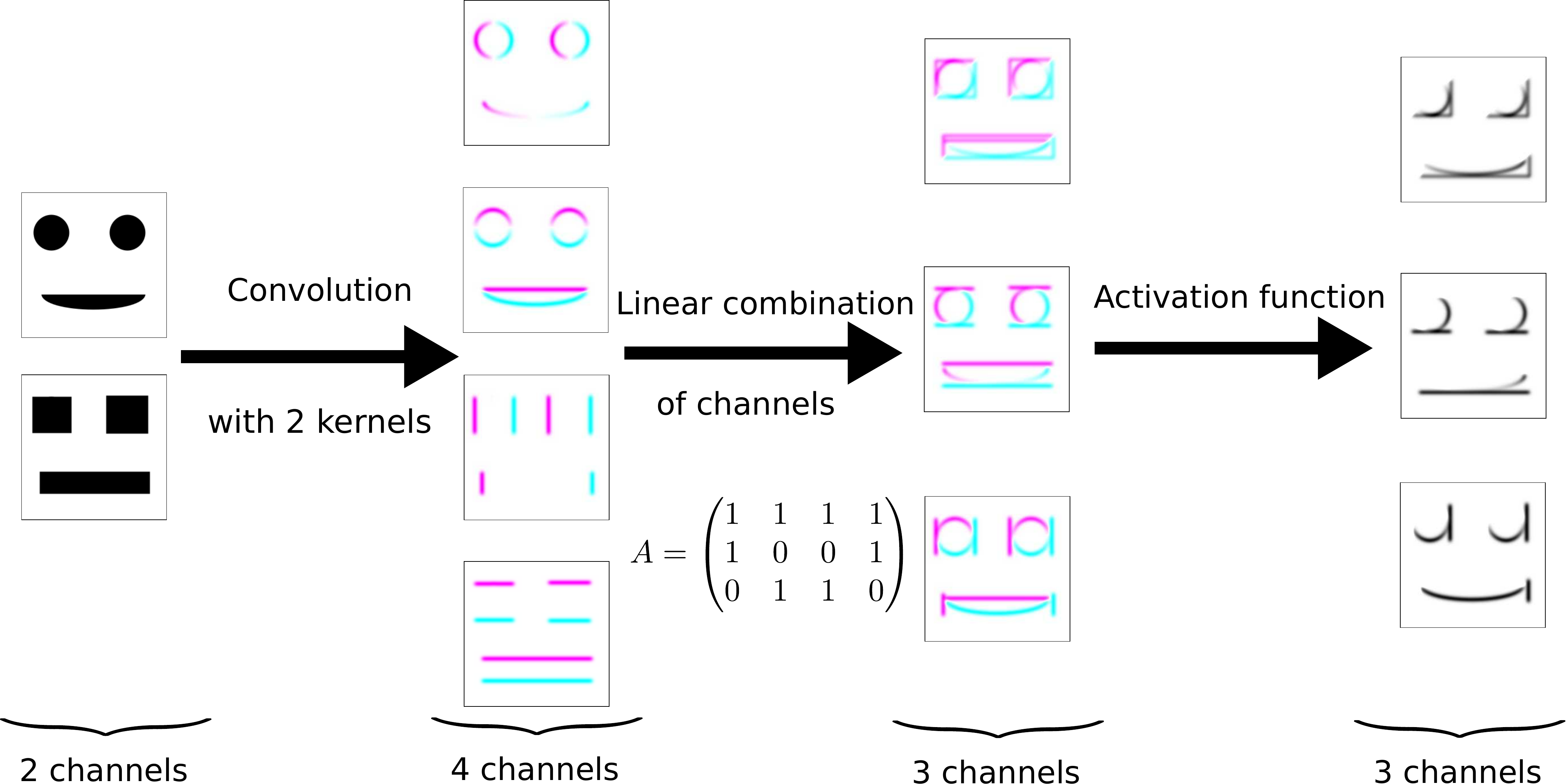}
  \end{center}
  \captionsetup{justification=centering}
  \caption{\label{fig:CNNExplanation}
           Illustration of the computation performed by each layer of a CNN.
           In the middle two rows, positive values are shown in turquoise,
           while negative values are shown in purple.
           The first convolution kernel picks up vertical edges,
           while the second picks up horizontal edges.
           In the notation of the text, we have $N = 2$, $k = 2$, and $M = 3$.}
\end{figure}

\subsubsection*{Translation equivariant maps}


Convolutions are closely related to \emph{translations}.
Precisely, for $i \in \{0,\dots,d-1\}^2$ and $X \in \R^{d \times d}$, we denote by
$S_i X$ the translation of $X$ by $i$, given by $(S_i X)_j = X_{j - i \ \mathrm{mod} \ d}$;
that is, the boundaries are treated periodically.
We then have $S_i (X \ast Y) = (S_i X) \ast Y = X \ast (S_i Y)$ for all $X,Y \in \R^{d \times d}$.

Now, given a stack of images $X = (X_1,\dots,X_N)$, write $S_i X = (S_i X_1, \dots, S_i X_N)$.
We say that a function $F : (\R^{d \times d})^{N} \to (\R^{d \times d})^{M}$ is
\emph{translation equivariant} if it commutes with translations, that is, if
\(
  F(S_i X)
  = S_i [F(X)]
\)
for all $X \in (\R^{d \times d})^N$.
It is not hard to see that every CNN defines a translation equivariant map.

An important property of translation equivariant maps is that they are uniquely determined
by each of their \emph{components}.
Indeed, for $i \in \{0,\dots,d-1\}^2$ and ${X = (X^{(1)},\dots,X^{(N)}) \in (\R^{d \times d})^N}$,
let us write $\pi_i (X) = (X^{(1)}_i, \dots, X^{(N)}_i) \in \R^N$ for the projection of
(each channel of) $X$ onto the $i$-th component.
Note that $\pi_i (X) = \pi_0 (S_{-i} X)$.
Therefore, if $F : (\R^{d \times d})^N \to (\R^{d \times d})^M$ is translation equivariant, then
\[
  \pi_i (F(X))
  = \pi_0 \big( S_{-i} [F(X)] \big)
  = \pi_0 \big( F(S_{-i} X) \big) .
\]
Thus, if the component $\pi_0 \circ F$ is known, then so are all other components
$\pi_i \circ F$.


\subsubsection*{The equivalence between approximation properties of FNNs and CNNs}

We next state our main results, Theorem~\ref{thm:main} and Remark~\ref{rem:TheRemarkForTheConverse},
in the simplified setting of CNNs acting on images $X \in \R^{d \times d}$.
This result holds for any measurable activation function $\varrho : \R \to \R$.


\begin{theorem}\label{thm:MainIntro}
  Let $F: (\R^{d\times d})^{N} \to \R^{d\times d}$ be translation equivariant.
  Furthermore, let ${\Omega_1,\dots,\Omega_N \subset \R}$ be measurable, and set
  \(
    \Omega := \Omega_1^{d \times d} \times \cdots \times \Omega_N^{d \times d}
           \subset (\R^{d \times d})^{N}.
  \)
  Then, the following hold for each $p \in (0,\infty]$, each $\eps > 0$, and in fact also for $\eps = 0$:
  \begin{enumerate}
   \item If there is an FNN $\Phi$ with $W \in \N$ active parameters and
         $L$ layers satisfying ${\|\Phi - \pi_0 \circ F\|_{L^p(\Omega)} \leq \eps}$,
         then there is a CNN $\Psi$ with $L$ layers and at most $2W$ active parameters and such that
         $\|\Psi - F\|_{L^p(\Omega)} \leq d^{2/p} \, \eps$.
         \vspace{0.1cm}

   \item If there exists a CNN $\Psi$ with $W$ active parameters and $L$ layers satisfying
         $\|\Psi - F\|_{L^p(\Omega)} \leq \eps$,
         then there exists an FNN $\Phi$ with $L$ layers and at most $d^4 W$ weights and such that
         $\|\Phi - \pi_0 \circ F\|_{L^p(\Omega)} \leq \eps$.
  \end{enumerate}
\end{theorem}
\begin{rem*}
  1) For simplicity, we here only consider functions $F$ with one output channel.
  The theorem also holds in greater generality.

  2) In addition to the bounds regarding the number of active parameters, Theorem~\ref{thm:main}
  also provides bounds on the number of neurons.
\end{rem*}

\subsubsection*{The proof idea}

Part~(2) of Theorem~\ref{thm:MainIntro} is almost immediate,
because convolutions are special linear maps and therefore every CNN is also an FNN.
The increase from $W$ active parameters to $d^4 W$ active parameters is due to the
distinct way in which the number of active parameters are counted for CNNs and FNNs.
For instance, a convolution $\R^d \ni x \mapsto x \ast a \in \R^d$ with a vector $a \in \R^d$
has at most $d$ active parameters, while the matrix $A_a \in \R^{d \times d}$
representing the map $x \mapsto x \ast a$ has up to $d^2$ active parameters.
This is explained in more detail in Remark~\ref{rem:TheRemarkForTheConverse}.

\medskip{}

The first part of Theorem~\ref{thm:MainIntro}, on the other hand, is more intricate.
The proof idea is as follows:
The FNN $\Phi$ is of the form
$\Phi = T_L \circ \varrho \circ T_{L-1} \circ \cdots \circ \varrho \circ T_2 \circ \varrho \circ T_1$,
where $\varrho$ is applied componentwise, and where $T_1 : (\R^{d \times d})^N \to \R^{N_1}$
and $T_\ell : \R^{N_{\ell - 1}} \to \R^{N_\ell}$ ($2 \leq \ell \leq L$) are affine-linear;
here, $N_\ell$ is the number of neurons in the $\ell$-th layer of $\Phi$.
The general idea is to construct from this a ``lifted'' CNN $\Psi$ in which
each neuron (representing a real number $x \in \R$) of the FNN $\Phi$ is replaced by a full
\emph{channel} (representing an image $X \in \R^{d \times d}$).
This needs to be done in such a way that the lifted CNN $\Psi$ computes a
``translation equivariant version'' of the FNN $\Phi$.

Formally, we will construct affine-linear maps
$A_1 : (\R^{d \times d})^N \to (\R^{d \times d})^{N_1}$
and $A_\ell : (\R^{d \times d})^{N_{\ell - 1}} \to (\R^{d \times d})^{N_\ell}$ ($2 \leq \ell \leq L$)
which have two important properties.
First, all of the maps $A_\ell$ ($1 \leq \ell \leq L$) are convolutional
(corresponding to the first two steps in Figure~\ref{fig:CNNExplanation}).
Second, we have
\begin{equation}
  \pi_0 \circ A_1 = T_1
  \quad \text{and} \quad
  \pi_i \circ A_\ell = T_\ell \circ \pi_i
  \quad \text{for all} \quad i \in \{0,\dots,d-1\}^2.
  \label{eq:IntroMainProofIdea}
\end{equation}
This means that projecting each channel of $A_\ell (X)$ onto the $i$-th component is the same
as projecting each channel of $X$ onto the $i$-th component and then applying $T_\ell$.
Denoting by $\Psi$ the CNN defined by the maps $A_1,\dots,A_L$,
Equation~\eqref{eq:IntroMainProofIdea} shows
\[
  \pi_0 \circ \Psi
  = \pi_0 \circ A_L \circ \varrho \circ A_{L-1} \circ \cdots \circ \varrho \circ A_1
  = T_L \circ \varrho \circ T_{L-1} \circ \cdots \circ \varrho \circ T_1
  = \Phi.
\]
Now, since $\pi_0 \circ \Psi = \Phi$ is close to $\pi_0 \circ F$, and since both $F$ and $\Psi$
are translation equivariant, it follows that $\Psi$ is close to $F$.
We refer to the proof of Theorem~\ref{thm:main} for the precise construction of the maps $A_1,\dots,A_L$.

\subsection{Related work}
\label{sub:RelatedWork}

The findings in \cite{UniConv,ZhouDeepDistributedConvolutionalNetworks}
are closely related to the results presented in this paper.
The focus in \cite{UniConv,ZhouDeepDistributedConvolutionalNetworks}, however,
is specifically on CNNs that employ $1$-dimensional convolutions
\emph{with very short convolution kernels} of a fixed length $2 \leq s \leq d$.
For convolutional networks of this type, \cite[Theorem A]{UniConv} establishes a universal
approximation result:
Given a compact set $\Omega \subset \R^d$, a sparsity parameter $2 \leq s \leq d$,
an error bound $\eps > 0$, and an arbitrary continuous function $f : \Omega \to \R$,
there is a depth $J = J(f,\eps) \in \N$ and a CNN $\Phi_{f,\eps}$ as just described
and of depth $J$ such that $\| f - \Phi_{f,\eps}\|_{C(\Omega)} \leq \eps$.

The main differences between our results and the findings in \cite{UniConv} are the following:
First, the emphasis in \cite{UniConv} is on CNNs using convolution kernels with small support,
while we put no restriction on the size of the convolution kernels.
Second, the type of networks considered in \cite{UniConv} is quite different from the ones considered by us:
In \cite{UniConv}, each layer of the network performs \emph{a single convolution} before the non-linearity.
In contrast, each layer in our CNNs performs multiple convolutions and can have multiple
input- and output channels.
Further, the networks in \cite{UniConv} do \emph{not} produce translation equivariant functions,
since the last layer is allowed to be non-convolutional, and since the networks employ
\emph{bias vectors} instead of one constant bias for each channel of the network.
For this reason, the networks considered in \cite{UniConv} can be universal
in the class of \emph{all} continuous functions, not just in the class of translation-equivariant ones.
Third, the universality results and the approximation rates in \cite{UniConv}
\emph{require the number of layers to grow unboundedly}.
In contrast, we consider networks of a fixed depth.
Finally, \cite{UniConv} only considers \emph{one-dimensional convolutions}
(corresponding to data like sound signals), while our results apply for arbitrary dimensions,
and thus to input signals like images or videos.

In \cite[Proposition 3.1]{YarotskyWasFirstAgain}, a universal approximation theorem for CNNs is established.
Specifically, it is shown there that every equivariant continuous function can be
approximated arbitrarily well by a CNN with one hidden layer.
The definition of CNNs in \cite{YarotskyWasFirstAgain} uses convolutions based on general
(compact, abelian) groups and coincides with the one in the present paper, when specialised to finite groups.
In contrast to the present work, \cite{YarotskyWasFirstAgain} does not study rates of approximation,
but universality only.
In addition, in \cite[Section 3]{YarotskyWasFirstAgain},
Yarotsky establishes a range of more abstract universality results
for networks that take as inputs functions $f \in L^2(\R^d)$ instead of
the discrete inputs that are considered in the present paper.
For these types of networks, the role of pooling is analysed in \cite[Section 4]{YarotskyWasFirstAgain}.

We would also like to mention the paper \cite{MallatUnderstandingDeepCNN} which---while not
concerned with approximation properties---is one of the first rigorous investigations
of the mathematical properties of CNNs.



\subsection{Pooling}

The convolutional networks used in practice often employ a form of \emph{pooling} after
each layer (see \cite[Section 9.3]{Goodfellow-et-al-2016});
besides, the convolutions are sometimes \emph{zero-padded}
(see \cite[Section 9.5]{Goodfellow-et-al-2016}) instead of periodic.
However, both of these techniques destroy the translation equivariance.
For this reason---and for the sake of mathematical simplicity---we restrict ourselves to
the case of periodic convolutions without pooling in this short note.

\subsection*{Structure of the paper}

The paper is structured as follows:
We begin by introducing FNNs and CNNs in Sections~\ref{sec:FullyConnectedNetworks}
and \ref{sec:ConvNets}, respectively.
The aforementioned equivalence of approximation rates is then
demonstrated in Section~\ref{sec:ApproximationUsingCNN}.

\section{Fully-connected neural networks}
\label{sec:FullyConnectedNetworks}

Let $\Gr$ be a finite group of cardinality $|\Gr| \in \N$.
For a finite set $I$, we denote the set of real-valued sequences with index set $I$ by
$\R^{I} = \{ (x_i)_{i \in I} \,|\, x_i \in \R \,\, \forall \, i \in I\}$.
In this note, we consider neural network functions
the input of which are elements of $\R^{\FirstN{M} \times \Gr}$,
where $\FirstN{M} := \{1,\dots,M\}$.
In Section~\ref{sec:ApproximationUsingCNN},
we will compare the expressivity of such FNNs with that of CNNs.
Even though the group structure of $\Gr$ is not used in the present
section, it will be essential for defining CNNs in the next section.

The following definition of FNNs
is standard in the mathematical literature on neural networks;
only the restriction to inputs in $\R^{\FirstN{C_0} \times \Gr}$
is slightly unusual.

\begin{definition}\label{def:FullyConnectedNetworks}
Let $\Gr$ be a finite group, let $C_0, L\in \N$, and $N_1, \dots, N_L\in \N$.
A \emph{fully-connected neural network}  $\Phi = (V_1, \dots, V_L)$ is a sequence of affine-linear
maps,
where $V_1: \R^{\FirstN{C_0} \times \Gr} \to \R^{N_1}$
and ${V_\ell: \R^{N_{\ell-1}} \to \R^{N_\ell}}$ for $1 < \ell \leq L$.
The \emph{architecture} $\arch{\Phi}$ is given by
$\arch {\Phi} := (C_0 \cdot |\Gr|, N_1, \dots, N_L)$.
For an arbitrary function $\varrho:\R \to \R$
(called the \emph{activation function}), we define
the \emph{$\varrho$-realisation} of the network $\Phi = (V_1,\dots,V_L)$ as
\begin{align*}
  R_\varrho (\Phi) : \R^{\FirstN{C_0} \times \Gr} \to \R^{N_L} \, , \quad
   \,\, x &\mapsto x_L,
\end{align*}
where $x_0 := x$ and $x_{\ell + 1} := \varrho \big( V_{\ell+1} (x_\ell) \big)$ for $0 \leq \ell \leq L - 2$,
while $x_L := V_L (x_{L-1})$.
Here, $\varrho$ is applied component-wise, that is,
$\varrho \big( (x_1, \dots, x_n) \big) = \big( \varrho(x_1), \dots, \varrho(x_n) \big)$.

For an affine-linear map $V : \R^I \to \R^J$, there is a uniquely determined
vector $b \in \R^J$ and a linear map $A : \R^I \to \R^J$ such that
$V(x) = A \, x + b$ for all $x \in \R^I$.
We then set $\|V\|_{\ell^0} := \|b\|_{\ell^0} + \|A\|_{\ell^0}$, where
$\|b\|_{\ell^0} = |\{j \in J \,|\, b_j \neq 0\}|$ denotes the number of non-zero
entries of $b$, and where $\|A\|_{\ell^0} := \sum_{i \in I} \|A \, \delta_i \|_{\ell^0}$,
with $(\delta_i)_{i \in I}$ denoting the standard basis of $\R^I$.
With this notation, we define the \emph{number of weights} $W(\Phi)$ and the
\emph{number of neurons} $N(\Phi)$ as
\[
  W(\Phi) := \sum_{\ell=1}^L \|V_\ell\|_{\ell^0}
  \qquad \text{and} \qquad
  N(\Phi) := C_0 \cdot |\Gr| + \sum_{\ell=1}^L N_\ell \, .
\]
\end{definition}

\section{Convolutional neural networks}\label{sec:ConvNets}



For a finite group $\Gr$ and functions $a,b : \Gr \to \R$, we denote by
$a \ast b$ the \emph{convolution} of $a$ and $b$, defined by
\begin{equation}
  a \ast b : \Gr \to \R, \quad
  g \mapsto \sum_{h \in \Gr}
              a(h) \, b(h^{-1} g) \, .
  \label{eq:ConvolutionDefinition}
\end{equation}
The first step in computing the output of a CNN is to convolve the
input with different convolution kernels.
This leads to different channels, each of which has the same dimension.
Each layer of the network thus has a \emph{spatial dimension}
(the number of elements $|\Gr|$ of the group $\Gr$)
and a \emph{channel dimension} (the number of channels).
After the convolution step, the different channels are combined {in an affine-linear fashion},
but only along fixed spatial coordinates (see Figure~\ref{fig:CNNExplanation}).
Finally, the activation function is applied component-wise,
and the whole procedure is repeated on the next layer,
with input given by the output of the present layer.

We shall now turn this informal description into a formal definition.
The definitions might appear to be overly technical,
but these technicalities will be important later to estimate the number of parameters of a CNN.
Before stating that definition, we introduce notation allowing for more
succinct expressions.
For ${x = (x_{i,g})_{i \in \FirstN{M}, g \in \Gr} \in \R^{\FirstN{M} \times \Gr}}$,
we write $x_i := (x_{i,g})_{g \in \Gr} \in \R^{\Gr}$ for $i \in \FirstN{M}$.
Likewise, we will identify a family
$(x_i)_{i \in \FirstN{M}}$, where $x_i \in \R^{\Gr}$,
with the family $(x_i (g))_{i \in \FirstN{M}, g \in \Gr} \in \R^{\FirstN{M} \times \Gr}$.

Finally, if $I,J$ are sets, and if $F : \R^J \to \R^I$, then we define the \emph{lifting} of $F$
as the map $\lift{F}: \R^{J \times \Gr} \to \R^{I \times \Gr}$ that results from applying $F$
along fixed spatial coordinates.
Formally, this means
\begin{equation}
  \lift{F} :
  \R^{J \times \Gr} \to \R^{I \times \Gr}, \quad
  (x_{j,g})_{j \in J, g \in \Gr}
  \mapsto \big( [F ( (x_{j,g})_{j \in J}) ]_{i} \big)_{i \in I, g \in \Gr}
  \, .
  \label{eq:LiftingDefinition}
\end{equation}
It is not hard to verify $\lift{(F \circ G)} = \lift{F} \circ \lift{G}$ for
$F : \R^J \to \R^I$ and $G : \R^K \to \R^J$.

Given these notations, we can state two final preparatory definitions.
We start by defining the maps that perform the convolutional steps in a CNN.

\begin{definition}\label{def:filtering}
Let $\Gr$ be a finite group
and ${B : \R^{\FirstN{C_1} \times \Gr} \to \R^{\FirstN{k} \times \FirstN{C_1} \times \Gr}}$,
where $k, C_1 \in \N$.
We say that $B$ is \emph{filtering, with $k$ filters}, if there are $a_1, \dots, a_k \in \R^{\Gr}$
such that
\begin{equation}
  B \big( (x_i)_{i \in \FirstN{C_1}} \big)
  = (x_j \ast a_r)_{(r, j) \in \FirstN{k} \times \FirstN{C_1}}
  \qquad \forall \, (x_i)_{i \in \FirstN{C_1}} \in \R^{\FirstN{C_1} \times \Gr} \, .
  \label{eq:FilteringMapDefinition}
\end{equation}
In this case, we write $B \in \filter{k,C_1}$, and set
$\|B\|_{\ell^0_{\mathrm{filter}}} := \sum_{r=1}^k \|a_r\|_{\ell^0}$.
This is well-defined, since the filters $a_1,\dots,a_k$ are uniquely determined by $B$.
\end{definition}

The following definition formalises the first two steps in Figure~\ref{fig:CNNExplanation}.

\begin{definition}\label{def:SpatConvSemCon}
Given $k, C_1, C_2 \in \N$, we say that a map
$T: \R^{\FirstN{C_1} \times \Gr} \to \R^{\FirstN{C_2} \times \Gr}$
is a \emph{spatially-convolutional, semi-connected map with $k$ filters},
if $T$ can be written as $T = \lift{A} \circ B$ for $B \in \filter{k, C_1}$ and
an affine-linear map ${A : \R^{\FirstN{k} \times \FirstN{C_1}} \to \R^{\FirstN{C_2}}}$.
In this case, we write $T \in \conv{k, C_1, C_2}$, and define
\begin{align*}
  \|T\|_{\ell^0_{\mathrm{conv}}}
  := \min \Big\{
            \|A\|_{\ell^0} + \|B\|_{\ell^0_{\mathrm{filter}}}
            \,\Big|
            \begin{array}{l}
              A : \R^{\FirstN{k} \times \FirstN{C_1}} \to \R^{\FirstN{C_2}} \text{ affine-linear}\\
              \text{and } B \in \filter{k, C_1} \text{ with } T = \lift{A} \circ B
            \end{array}
            \!\!
          \Big\} .
\end{align*}
\end{definition}
%

\begin{remark}\label{rem:ConvolutionalMapsAsAffineLinearMaps}
  Every ${T \in \conv{k, C_1, C_2}}$ is affine-linear.
  Furthermore, the number of weights of $T$ as an ``ordinary'' affine-linear map
  can be estimated up to a multiplicative constant by
  $\|T\|_{\ell^0_{\mathrm{conv}}}$; in fact, we have
  \begin{equation}
    \|T\|_{\ell^0} \leq |\Gr|^2 \cdot \|T\|_{\ell^0_{\mathrm{conv}}} \, .
    \label{eq:WeightRelationOrdinaryConvolutional}
  \end{equation}
  To see this, choose an affine-linear map
  $A : \R^{\FirstN{k} \times \FirstN{C_1}} \to \R^{\FirstN{C_2}}$ and a filtering map
  $B \in \filter{k, C_1}$ such that $T = \lift{A} \circ B$ and
  $\|T\|_{\ell^0_{\mathrm{conv}}} = \|A\|_{\ell^0} + \|B\|_{\ell^0_{\mathrm{filter}}}$.
  Furthermore, choose $a_1, \dots, a_k \in \R^{\Gr}$ such that $B$ satisfies
  Equation~\eqref{eq:FilteringMapDefinition},
  and let $A_{\mathrm{lin}} : \R^{\FirstN{k} \times \FirstN{C_1}} \to \R^{\FirstN{C_2}}$ be linear
  and $b \in \R^{\FirstN{C_2}}$ such that $A(\cdot) = b + A_{\mathrm{lin}}(\cdot)$.

  Now, define $\lift{b} := (b_j)_{j \in \FirstN{C_2}, g \in \Gr} \in \R^{\FirstN{C_2} \times \Gr}$.
  It is not hard to see $T = \lift{b} + \lift{A_{\mathrm{lin}}} \circ B$, where the map
  $\lift{A_{\mathrm{lin}}} \circ B : \R^{\FirstN{C_1} \times \Gr} \to \R^{\FirstN{C_2} \times \Gr}$
  is linear.
  Furthermore, for arbitrary $i_0 \in \FirstN{C_1}, j \in \FirstN{C_2}$ and $h_0, g \in \Gr$,
  we have
  \begin{align*}
    \left( \left[\lift{A_{\mathrm{lin}}} \circ B\right] \delta_{i_0, h_0}\right)_{j,g}
    &= \Big[
        \Big(
          A_{\mathrm{lin}} \big(
                             \delta_{i_0, i} \cdot (\delta_{h_0} \ast a_r)_g
                           \big)_{r \in \FirstN{k}, i \in \FirstN{C_1}}
        \Big)
      \Big]_j \\
    &= \sum_{r=1}^k
        (A_{\mathrm{lin}})_{j, (r, i_0)} \cdot a_r (h_0^{-1} g) \, ,
  \end{align*}
  where we identified the linear map $A_{\mathrm{lin}}$ with the matrix associated to it
  (by virtue of the standard basis).
  The above identity implies that
  \begin{align*}
    \|T\|_{\ell^0}
    & = \left\|\lift{b}\right\|_{\ell^0} + \left\| \smash{\lift{A_{\mathrm{lin}}}} \circ B\right\|_{\ell^0}\\
    & \leq \sum_{j = 1}^{C_2} \,
             \sum_{g \in \Gr} \,
               \Indicator_{b_j \neq 0}
           + \sum_{j=1}^{C_2} \,
               \sum_{i_0 = 1}^{C_1} \,
                 \sum_{h_0, g \in \Gr}
                   \sum_{r=1}^k
                     \Indicator_{(A_{\mathrm{lin}})_{j, (r,i_0)} \neq 0}
                     \cdot \Indicator_{a_r (h_0^{-1} g) \neq 0} \\
    & \leq |\Gr| \cdot \|b\|_{\ell^0}
           + |\Gr| \cdot \Big( \max_{r=1,\dots,k} \|a_r\|_{\ell^0} \Big)
             \cdot \sum_{j=1}^{C_2}
                     \sum_{i_0 = 1}^{C_1}
                       \sum_{r=1}^k
                         \Indicator_{(A_{\mathrm{lin}})_{j, (r,i_0)} \neq 0} \\
    & \leq |\Gr| \cdot \|b\|_{\ell^0}
           + |\Gr| \cdot \Big( \max_{r=1,\dots,k} \|a_r\|_{\ell^0} \Big)
             \cdot \|A_{\mathrm{lin}}\|_{\ell^0}
      \leq |\Gr|^2 \cdot \|T\|_{\ell^0_{\mathrm{conv}}} \, .
  \end{align*}
  Here, the identity
  \(
    \sum_{h_0, g \in \Gr} \Indicator_{a_r (h_0^{-1} g) \neq 0}
     = \sum_{h_0 \in \Gr} \|a_r\|_{\ell^0} = |\Gr| \cdot \|a_r\|_{\ell^0}
  \)
  follows from the change of variables $h = h_0^{-1} g$.
  Furthermore, we used in the last step that
  $\|b\|_{\ell^0} + \|A_{\mathrm{lin}}\|_{\ell^0}
   = \|A\|_{\ell^0}
   \leq \|T\|_{\ell^0_{\mathrm{conv}}}$,
  and that $\|a_r\|_{\ell^0} \leq |\Gr|$, since $a_r \in \R^{\Gr}$.
\end{remark}

We now define CNNs similarly to FNNs, with the modification that the affine-linear maps
in the definition of the network are required to be spatially-convolutional, semi-connected.

\begin{definition}\label{def:ConvNets}
Let $L \in \N$, let $\Gr$ be a finite group,
let $C_0, C_1, \dots, C_L \in \N$, and let $k_1, \dots, k_L \in \N$.
A \emph{convolutional neural network} $\Phi$ with $L$ layers,
channel counts $(C_0, C_1, \dots, C_L)$, and filter counts $(k_1, \dots, k_L)$
is a tuple $\Phi = (T_1, \dots, T_L)$
where $T_\ell \in \conv{k_\ell,C_{\ell-1},C_{\ell}}$ for $\ell = 1,\dots,L$.

For a convolutional neural network $\Phi = (T_1, \dots, T_L)$ and an activation function
$\varrho:\R\to \R$, we define the \emph{$\varrho$-realisation} of $\Phi$ as
\begin{align*}
  R_\varrho(\Phi) :
  \R^{\FirstN{C_0} \times \Gr} \to \R^{\FirstN{C_L} \times \Gr} \, ,
  \quad
   \,\, x &\mapsto x_L, 
\end{align*}
where $x_0 := x$, $x_{\ell + 1} := \varrho \big( T_{\ell + 1} (x_\ell) \big)$
for $0 \leq \ell \leq L - 2$, and $x_L := T_L(x_{L-1})$.
Here, we again apply $\varrho$ component-wise.

The \emph{number of channels of $\Phi$} is
$C(\Phi) : = \sum_{\ell=0}^L C_\ell$.
The number ${C_0 = C_0 (\Phi) \in \N}$ is called the \emph{number of input channels of $\Phi$},
while ${C_L = C_L (\Phi) \in \N}$ is the \emph{number of output channels}.
Finally,
$W_{\mathrm{conv}}(\Phi) := \sum_{\ell=1}^L \|T_\ell\|_{\ell^0_{\mathrm{conv}}}$
is \emph{the number of weights}.
\end{definition}

\begin{rem*}
  It could be more natural to call $W_{\mathrm{conv}}(\Phi)$ the \emph{number of free parameters},
  instead of ``the number of weights''.
  We chose the present terminology primarily to be consistent
  with the established terminology for FNNs.
\end{rem*}

\begin{remark}\label{rem:ConvolutionalNetsAsFullyConnectedNets}
  With the identification $\R^{\FirstN{C_{\ell}} \times \Gr} \cong \R^{C_{\ell} \cdot |\Gr|}$,
  each CNN $\Phi = (T_1, \dots, T_L)$ is also an FNN, simply because
  each of the maps $T_\ell \in \conv{k_\ell, C_{\ell-1}, C_\ell}$ is an affine-linear map
  $T_\ell : \R^{\FirstN{C_{\ell-1}} \times \Gr} \to \R^{\FirstN{C_\ell} \times \Gr}$.
  When interpreting $\Phi$ as an FNN, it has architecture
  $\arch{\Phi} = (C_0 \cdot |\Gr|, C_1 \cdot |\Gr|, \dots, |C_L| \cdot |\Gr|)$,
  and thus $N (\Phi) \leq |\Gr| \cdot C(\Phi)$.
  Furthermore, as a consequence of Remark~\ref{rem:ConvolutionalMapsAsAffineLinearMaps}, we see
  \begin{equation}
    W(\Phi)
    = \sum_{\ell=1}^L \|T_\ell\|_{\ell^0}
    \leq |\Gr|^2 \cdot \sum_{\ell=1}^L \|T_\ell\|_{\ell^0_{\mathrm{conv}}}
    = |\Gr|^2 \cdot W_{\mathrm{conv}} (\Phi) \, .
    \label{eq:ConvNetAsFullyConnectedWeightControl}
  \end{equation}
\end{remark}


As just seen, CNNs are special FNNs.
Hence, it is natural to ask to what extent these networks can achieve
the same approximation properties as FNNs.
It turns out that the restriction to CNNs is significant,
since CNNs can only approximate so-called translation equivariant functions.
To make the concept of translation equivariance more precise, and, in particular,
meaningful for functions with different input and output dimensions,
we first introduce the notion of vectorisation:
For a set $I$ and a function $H : \R^{\Gr} \to \R^{\Gr}$ we define the
\emph{$I$-vectorisation} $\vect{H}{I}$ of $H$ as
\begin{equation}
  \vect{H}{I} :
  \R^{I \times \Gr} \to \R^{I \times \Gr}, \quad
  (x_{i})_{i \in I} \mapsto \big( H(x_i) \big)_{i \in I} \, .
  \label{eq:VectorisationDefinition}
\end{equation}

Next, we define the concept of translation equivariance.

\begin{definition}\label{def:TranslationCoVariant}
  Let $\Gr$ be a finite group, $I,J$ index sets, and
  ${F : \R^{I \times \Gr} \to \R^{J \times \Gr}}$.
  We say that $F$ is \emph{translation equivariant}, if
  ${F \circ \vect{S_g}{I} = \vect{S_g}{J} \circ F}$ for all ${g \in \Gr}$,
  where $\vect{S_g}{I}$ and $\vect{S_g}{J}$ are, respectively, the $I$-vectorisation
  and the $J$-vectorisation of the \emph{shift operator}
  \[
    S_g : \R^{\Gr} \to \R^{\Gr}, (x_h)_{h \in \Gr} \mapsto (x_{g^{-1} h})_{h \in \Gr}.
  \]
\end{definition}


As previously announced, every realisation of a CNN is translation equivariant,
as the following proposition demonstrates.

\begin{proposition}\label{prop:ConvNetsAreTranslationInvariant}
Let $\Gr$ be a finite group,
let $\varrho : \R \to \R$ be any function, and let $\Phi$ be a CNN.
Then the $\varrho$-realisation $R_\varrho (\Phi)$ is translation equivariant.
\end{proposition}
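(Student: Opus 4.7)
The plan is to decompose $R_\varrho(\Phi)$ into its constituent building blocks (filtering maps $B$, lifted affine-linear maps $\lift{A}$, and componentwise activations $\varrho$) and show that each of them is translation equivariant; then conclude by a simple induction on the number of layers, using the fact that translation equivariance is preserved under composition. The composition statement is immediate from the definition: if $F \circ \vect{S_g}{J} = \vect{S_g}{K} \circ F$ and $G \circ \vect{S_g}{I} = \vect{S_g}{J} \circ G$, then $(F \circ G) \circ \vect{S_g}{I} = F \circ \vect{S_g}{J} \circ G = \vect{S_g}{K} \circ (F \circ G)$.

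The key step, and the only one that really uses the group structure of $\Gr$, is showing that each filtering map $B \in \filter{k,C_1}$ is translation equivariant. Writing $B$ via its kernels $a_1, \dots, a_k$ as in \eqref{eq:FilteringMapDefinition}, this reduces to checking that convolution commutes with the shift operators, i.e.\ $(S_g y) \ast a = S_g(y \ast a)$ for every $y, a \in \R^{\Gr}$ and $g \in \Gr$. I would verify this by direct computation using the change of variables $h' \mapsto g h'$ in the sum defining the convolution:
\[
  \bigl( (S_g y) \ast a \bigr)(h)
  = \sum_{h' \in \Gr} y(g^{-1} h') \, a(h'^{-1} h)
  = \sum_{h'' \in \Gr} y(h'') \, a(h''^{-1} g^{-1} h)
  = (y \ast a)(g^{-1} h)
  = \bigl( S_g (y \ast a) \bigr)(h).
\]
Applied componentwise over the indices $(r,j) \in \FirstN{k} \times \FirstN{C_1}$, this yields $B \circ \vect{S_g}{C_1} = \vect{S_g}{k \cdot C_1} \circ B$.

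Next, lifted maps are translation equivariant essentially by construction: the definition \eqref{eq:LiftingDefinition} of $\lift{F}$ applies $F$ separately at each spatial coordinate, whereas $\vect{S_g}{\cdot}$ only permutes spatial coordinates; the two operations act on disjoint index sets and therefore commute. A one-line unraveling of the definitions shows $\lift{F} \circ \vect{S_g}{J} = \vect{S_g}{I} \circ \lift{F}$ for any $F : \R^J \to \R^I$. In particular, each $T_\ell = \lift{A_\ell} \circ B_\ell$ is translation equivariant as a composition of two such maps. Componentwise application of $\varrho$ is a special case of lifting (it is $\lift{\varrho_0}$ where $\varrho_0$ is $\varrho$ acting on a single channel), so it is translation equivariant as well.

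Putting the pieces together, the $\varrho$-realisation $R_\varrho(\Phi)$ is a composition of translation equivariant maps $T_\ell$ interleaved with componentwise applications of $\varrho$, hence translation equivariant by induction on $L$. No serious obstacle is anticipated; the only subtlety is being careful that the shift operators on the left and right of each equality live in the correct index set (input versus output channel counts), but this is automatic from the definition of vectorisation.
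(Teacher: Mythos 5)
Your proposal is correct and follows essentially the same route as the paper's proof: verify that convolution commutes with shifts (hence filtering maps are equivariant), that lifted maps commute with the vectorised shift operators because they act on disjoint index sets, and that componentwise application of $\varrho$ does too, then compose. The only cosmetic difference is that the paper phrases the lifting step via general coordinate permutations $C_\pi$ with $S_g = C_{\pi_g}$, which is the same observation you make directly.
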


\begin{proof}
  Directly from the definition of the convolution in Equation~\eqref{eq:ConvolutionDefinition},
  we see that convolutions are translation equivariant;
  that is, $S_g (x \ast a) = (S_g \, x) \ast a$ for $a,x \in \R^{\Gr}$ and $g \in \Gr$.
  Thus,
  \(
    \vect{S_g}{\FirstN{k} \times \FirstN{C_1}} \circ B
    = B \circ \vect{S_g}{\FirstN{C_1}}
  \)
  for all $g \in \Gr$ and any
  filtering map $B$ as in Equation~\eqref{eq:FilteringMapDefinition}.

  Now, for a permutation $\pi$ of $\Gr$, let us write
  $C_\pi : \R^{\Gr} \to \R^{\Gr},
   (x_g)_{g \in \Gr} \mapsto (x_{\pi(g)})_{g \in \Gr}$.
  A direct computation shows for any $F : \R^I \to \R^J$ that
  ${\lift{F} \circ \vect{C_\pi}{I} = \vect{C_\pi}{J} \circ \lift{F}}$.
  Clearly, $S_g = C_\pi$ for a suitable permutation $\pi = \pi_g$.
  Overall, we thus see for any $T = \lift{A} \circ B \in \conv{k,C_1,C_2}$ that
  \[
    T \circ \vect{S_g}{\FirstN{C_1}}
    = \lift{A} \circ \vect{S_g}{\FirstN{k} \times \FirstN{C_1}} \circ B
    = \vect{S_g}{\FirstN{C_2}} \circ \lift{A} \circ B
    = \vect{S_g}{\FirstN{C_2}} \circ T
    \quad \forall \, g \in \Gr \, .
  \]
  Since the activation function $\varrho$ is applied component-wise,
  it immediately follows that
  \(
    \varrho \big( T ( \vect{S_g}{\FirstN{C_1}} x ) \big)
    = \vect{S_g}{\FirstN{C_2}} \big( \varrho (T x) \big)
  \)
  for every $T \in \conv{k,C_1,C_2}$
  and all $x \in \R^{\FirstN{C_1} \times \Gr}$.
  By iterating this observation, we get the claim.
\end{proof}

The proposition shows that all realisations of CNNs are translation equivariant.
We note that the approximation theory of CNNs has been studied before,
for instance in the works \cite{cohen2016expressive, UniConv}.
The CNNs considered in these works, however, are different from the definition used in the present paper.
For the CNNs studied in the present paper, their universality for the class of continuous
translation equivariant functions has been established in \cite[Theorem 3.1]{YarotskyWasFirstAgain}.
Nevertheless, until now it was not known what kind of
\emph{approximation rates} these CNNs yield.
In the next section, we show that there is in fact a fundamental connection
between the approximation capabilities of FNNs and these CNNs.

\section{\texorpdfstring{Approximation rates of convolutional neural networks\\
for translation equivariant functions}
{Approximation rates of convolutional neural networks
for translation equivariant functions}}
\label{sec:ApproximationUsingCNN}

{We start by demonstrating in Subsection \ref{subsec:Transference} how one can associate to each FNN a CNN,
such that the first coordinate of the realisation of the CNN coincides with the realisation of the FNN.
In addition, in Remark~\ref{rem:TheRemarkForTheConverse} we show the converse statement,
i.e., that each CNN can be transformed into an associated FNN.

Afterwards, we demonstrate in Section~\ref{sec:EquivalenceOfRates} how this yields
an equivalence between the approximation rates of CNNs and FNNs.
We close with a concrete example showing how our results can be used to translate
approximation results for FNNs into approximation results for CNNs.}

\subsection{The transference principle}
\label{subsec:Transference}

We will measure approximation rates with respect to $L^p$ norms of vector-valued functions.
For these (quasi)-norms, we use the following convention:
For any $d \in \N$, any finite index set $J$, any measurable subset $\Omega \subset \R^{d}$,
any $p \in (0,\infty]$, and any measurable function $f: \Omega \to \R^{J}$, we define
\[
  \|f \|_{L^{p}(\Omega, \R^{J})}
  := \big\| x \mapsto \| f(x) \|_{\ell^p} \big\|_{L^{p}(\Omega)} \, .
\]
Note that this implies for $F: \Omega \to \R^{J\times \Gr}$ and $p < \infty$ that
\begin{equation}
  \|F\|_{L^{p}(\Omega, \R^{J\times \Gr})}^p
  = \sum_{g \in \Gr} \| F_{g} \|_{L^{p}(\Omega, \R^{J})}^p \, ,
  \label{eq:VectorValuedNormAsSum}
\end{equation}
where $F_g := (F)_g := \pi_g^J \circ F : \Omega \to \R^{J}$
denotes the \emph{$g$-th component of $F$}.
Here, the function $\pi_g^J$ is the \emph{projection onto the $g$-th component}, given by
\begin{equation}
  \pi_g^J : \R^{J \times \Gr}          \to     \R^J,
            (x_{j,h})_{j \in J, h \in \Gr} \mapsto (x_{j,g})_{j \in J}
  \, .
  \label{eq:CoordinateProjectionDefinition}
\end{equation}

\begin{rem*}
One could also define $\|F\|_{L^{p}(\Omega, \R^{J})}$ as
$\| \, |F| \, \|_{L^p (\Omega)}$, where $|F| (x) = |F(x)|$ denotes the Euclidean norm of $F(x)$.
It is not hard to see that both (quasi)-norms are equivalent since $J$ and $\Gr$ are finite;
furthermore, the constant of the norm equivalence only depends on $|J|$, $|\Gr|$, and $p$.
\end{rem*}

We denote the identity element of $\Gr$ by $1$ and observe that if
$F,G: \R^{I \times \Gr} \to \R^{J \times \Gr}$ are translation equivariant and
$(F)_1 = (G)_1$ then $F = G$; indeed, it suffices to show for all $g \in \Gr$
that $(F)_g = (G)_g$. This holds since we have
\begin{equation}
  (F)_g
  = \pi_1^J \circ \vect{S_{g^{-1}}}{J} \circ F
  = \pi_1^J \circ F \circ \vect{S_{g^{-1}}}{I}
  = (F)_1 \circ \vect{S_{g^{-1}}}{I}
  \label{eq:CovariantComponentTrick}
\end{equation}
for every translation equivariant function
$F : \R^{I \times \Gr} \to \R^{J \times \Gr}$.

Given a finite index set $I \neq \emptyset$, we say that a subset
$\Omega \subset \R^{I \times \Gr}$ is \emph{$\Gr$-invariant},
if $\vect{S_g}{I} (\Omega) \subset \Omega$ for all $g \in \Gr$.
An example of such a set is $\prod_{i \in I} \Omega_i^{\Gr}$,
where the sets $\Omega_i \subset \R$ for $i \in I$ can be chosen arbitrarily.
Since $\vect{S_g}{I} x = P_g x$ for all $x \in \R^{I \times \Gr}$ and a suitable
permutation matrix $P_g$, Equations~\eqref{eq:VectorValuedNormAsSum} and
\eqref{eq:CovariantComponentTrick} show for any $p \in (0,\infty)$, any measurable
$\Gr$-invariant set $\Omega \subset \R^{I \times \Gr}$ and any two
translation equivariant functions $F,G : \R^{I \times \Gr} \to \R^{J \times \Gr}$ that
\begin{equation}
  \begin{split}
    \| F - G \|_{L^p(\Omega, \R^{J \times \Gr})}
    & = \Big(
          \sum_{g\in \Gr}
            \| (F)_g - (G)_g \|_{L^p(\Omega,  \R^J)}^p
        \Big)^{1/p} \\
    & = |\Gr|^{{1}/{p}} \cdot \| (F)_1 - (G)_1 \|_{L^p(\Omega, \R^J)} \, ,
  \end{split}
  \label{eq:TranslationCovariantNorm}
\end{equation}
and this clearly remains true for $p = \infty$.


We can now state the transference principle between FNNs and CNNs.

\begin{theorem}\label{thm:main}
  Let $\Gr$ be a finite group, let $\eps \in [0,\infty)$,
  $p\in (0, \infty]$, and $C_0,N \in \N$.
  Let $\Omega \subset \R^{\FirstN{C_0} \times \Gr}$ be $\Gr$-invariant and measurable,
  and let $\varrho : \R \to \R$ be measurable.

  Let $F: \R^{\FirstN{C_0} \times \Gr} \! \to \R^{\FirstN{N} \times \Gr}$
  be measurable and translation equivariant,
  and let $\Phi$ be an FNN of architecture
  $\arch{\Phi} = (C_0 \cdot |\Gr|, N_1, \dots, N_{L-1}, N)$ satisfying
  ${\|(F)_1 - R_\varrho(\Phi)\|_{L^p(\Omega, \R^{N})} \leq \eps}$.

  Then there is a CNN $\Psi$ with channel counts
  $(C_0, N_1, \dots, N_{L-1}, N)$, with filter counts $(N_1 \cdot C_0, 1, \dots, 1)$,
  and such that
  \(
    \| F - R_\varrho (\Psi) \|_{L^p(\Omega,\R^{\FirstN{N} \times \Gr})}
    \leq |\Gr|^{1/p} \cdot \eps
  \)
  and $W_{\mathrm{conv}} (\Psi) \leq 2 \cdot W(\Phi)$.
  Here, we use the convention $|\Gr|^{1/\infty} = 1$.
\end{theorem}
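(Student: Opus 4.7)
The plan is to construct $\Psi = (T_1, \dots, T_L)$ layer-by-layer so that $R_\varrho(\Psi)$ is translation equivariant and its spatial slice at the identity $1 \in \Gr$ coincides with $R_\varrho(\Phi)$. Once this is achieved, the error bound is automatic: $R_\varrho(\Psi)$ is translation equivariant by Proposition~\ref{prop:ConvNetsAreTranslationInvariant}, so combining the translation equivariance of $F$ with Equation~\eqref{eq:TranslationCovariantNorm} yields
\[
  \|F - R_\varrho(\Psi)\|_{L^p(\Omega, \R^{\FirstN{N} \times \Gr})}
  = |\Gr|^{1/p} \cdot \|(F)_1 - R_\varrho(\Phi)\|_{L^p(\Omega, \R^N)}
  \leq |\Gr|^{1/p} \cdot \eps \, .
\]

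The first layer carries the real technical weight, since $V_1$ is a genuinely spatial affine map. Writing $V_\ell x = b^{(\ell)} + A^{(\ell)} x$, I take $T_1 = \lift{A} \circ B \in \conv{N_1 C_0, C_0, N_1}$ with $N_1 \cdot C_0$ filters indexed by pairs $(j, i) \in \FirstN{N_1} \times \FirstN{C_0}$, defined by $a_{(j, i)}(h) := A^{(1)}_{j, (i, h^{-1})}$, and with combining affine map $A(y)_{j'} := b^{(1)}_{j'} + \sum_{i=1}^{C_0} y_{(j', i), i}$. A direct unwinding of the convolution formula~\eqref{eq:ConvolutionDefinition}, using that $a_{(j', i)}(h^{-1}) = A^{(1)}_{j', (i, h)}$, gives $(T_1 x)_{j, 1} = (V_1 x)_j$. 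For the weight count, $\|B\|_{\ell^0_{\mathrm{filter}}} = \sum_{j, i} \|a_{(j, i)}\|_{\ell^0} = \|A^{(1)}\|_{\ell^0}$, while $A$ can be realised with $\|A\|_{\ell^0} \leq \|b^{(1)}\|_{\ell^0} + \#\{(j, i) : a_{(j, i)} \neq 0\} \leq \|V_1\|_{\ell^0}$ by setting the combining coefficient at any vanishing filter to $0$. Hence $\|T_1\|_{\ell^0_{\mathrm{conv}}} \leq 2 \|V_1\|_{\ell^0}$.

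For $\ell = 2, \dots, L$, the map $V_\ell$ is already between Euclidean spaces of the right dimensions, so I take $T_\ell \in \conv{1, N_{\ell-1}, N_\ell}$ with the single filter $a_1 := \delta_1 \in \R^\Gr$ (the Dirac at the identity, which makes the filtering step the identity) and combining map $A := V_\ell$. Then $T_\ell$ applies $V_\ell$ independently on each spatial slice, giving $(T_\ell y)_{\cdot, g} = V_\ell(y_{\cdot, g})$ and $\|T_\ell\|_{\ell^0_{\mathrm{conv}}} \leq \|V_\ell\|_{\ell^0} + 1$. In the degenerate case $V_\ell = 0$, I instead take both the filter and the combining map to be zero (so $T_\ell = 0$ with $\|T_\ell\|_{\ell^0_{\mathrm{conv}}} = 0$); otherwise $\|V_\ell\|_{\ell^0} \geq 1$ absorbs the stray $+1$. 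Summing across all layers yields $W_{\mathrm{conv}}(\Psi) \leq 2 W(\Phi)$.

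It then remains only to confirm $(R_\varrho(\Psi))_1 = R_\varrho(\Phi)$, which follows by a short induction on $\ell$: the base case is $(T_1 x)_{\cdot, 1} = V_1 x$, and the induction step combines the component-wise action of $\varrho$ with the identity $(T_\ell y)_{\cdot, 1} = V_\ell(y_{\cdot, 1})$. I expect the main obstacle to be the design of $T_1$: each non-zero entry of $V_1$ must be encoded \emph{exactly once} as a non-zero filter weight rather than being duplicated between the filtering and combining stages, which is precisely why the $h \mapsto h^{-1}$ twist appears in the definition of $a_{(j, i)}$ and why the overall constant in the weight bound is $2$ rather than something larger.
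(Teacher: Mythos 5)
Your construction is correct and essentially identical to the paper's: the same first-layer filters (your $h \mapsto h^{-1}$ twist is exactly the paper's $(v^i_j)^\ast$ with $v^\ast_g := v_{g^{-1}}$), the same diagonal combining map with coefficients zeroed at vanishing filters to keep $\|A\|_{\ell^0} \leq \|V_1\|_{\ell^0}$, and the same $\delta_1$-filter identity layers giving $T_\ell = \lift{V_\ell}$ for $\ell \geq 2$. The only deviation is that you handle the degenerate case $V_\ell = 0$ locally with a zero filter instead of the paper's global case distinction (which builds a constant network there); your variant is equally valid and even yields a $\Psi$ independent of $\varrho$ in all cases.
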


\begin{rem*}
  {1) The proof shows that the network $\Psi$ can be chosen independently
   of the activation function $\varrho$, unless $\Phi = (V_1, \dots, V_L)$ with $V_\ell = 0$
   for some $\ell \in \{1,\dots,L\}$.}


  {2)}
  Since we can choose $\eps = 0$ and $\Omega = \R^{\FirstN{C_0} \times \Gr}$,
  the theorem shows in particular that if
  ${(F)_1 = R_\varrho (\Phi)}$ for an FNN $\Phi$ of
  architecture $(C_0 \cdot |\Gr|, N_1,\dots,N_{L-1},N)$, then $F = R_\varrho (\Psi)$
  for a CNN $\Psi$ with channel counts
  $(C_0, N_1,\dots,N_{L-1},N)$, with filter counts $(N_1 \cdot C_0, 1, \dots, 1)$,
  and with $W_{\mathrm{conv}} (\Psi) \leq 2\cdot W(\Phi)$.


  {3) In addition to the number of layers and weights, the \emph{complexity of the individual
   weights} can also be relevant. Given a set $\Lambda \subset \R$, we say that an affine-linear
   map $V : \R^I \to \R^J$ \emph{has weights in $\Lambda$} if $V(x) = b + A \, x$
   for all $x \in \R^I$ and certain $b \in \Lambda^J$ and $A \in \Lambda^{J \times I}$.
   Likewise, we say that an FNN $\Phi = (V_1, \dots, V_L)$ has weights in $\Lambda$ if all
   $V_\ell$ have weights in $\Lambda$.

   The proof of the theorem shows that if the FNN $\Phi = (V_1,\dots,V_L)$
   has weights in $\Lambda$, and if $V_\ell \neq 0$ for some $\ell$,
   then the CNN $\Psi = (T_1,\dots,T_L)$ constructed in the theorem satisfies
   $T_\ell = \lift{A_\ell} \circ B_\ell$ where all $A_\ell, B_\ell$ have weights in
   $\Lambda \cup \{0,1\}$.}
\end{rem*}

\begin{proof}
In view of Equation~\eqref{eq:TranslationCovariantNorm} and since $R_\varrho (\Psi)$ is translation
equivariant (see Proposition~\ref{prop:ConvNetsAreTranslationInvariant}), we need only show that
there is a CNN $\Psi$ with the asserted channel counts, filter counts,
and number of weights, and such that $(R_\varrho (\Psi))_{1} = R_\varrho (\Phi)$.

Let $\Phi = (V_1,\dots,V_L)$. For brevity, set $N_0 := C_0$ and $N_L := N$, and furthermore
$k_1 := N_1 \cdot C_0$ and $k_\ell := 1$ for $\ell \in \{2,\dots,L\}$.

We first handle a few special cases, in order to avoid tedious case distinctions later on.
First, if $\|V_L\|_{\ell^0} = 0$, then $R_\varrho (\Phi) \equiv 0$.
Then, let $\Psi = (T_1,\dots,T_L)$, where $T_\ell = \lift{A_\ell} \circ B_\ell$ with
$A_\ell : \R^{\FirstN{k_\ell} \times \FirstN{N_{\ell - 1}}} \to \R^{\FirstN{N_\ell}}, x \mapsto 0$,
and with
\[
  B_\ell :
  \R^{\FirstN{N_{\ell - 1}} \times \Gr}
  \to \R^{\FirstN{k_\ell} \times \FirstN{N_{\ell - 1}} \times \Gr},
  (x_i)_{i \in \FirstN{N_{\ell - 1}}}
  \mapsto (x_i \ast 0)_{r \in \FirstN{k_\ell}, i \in \FirstN{N_{\ell - 1}}}
\]
for all $\ell \in \{1,\dots,L\}$.
It is then trivial to verify that $\Psi$ has the desired number of filters and channels,
that $(R_\varrho (\Psi))_1 \equiv 0$, and that $\|T_\ell\|_{\ell^0_{\mathrm{conv}}} = 0$
for all $\ell = 1,\dots,L$, so that $W_{\mathrm{conv}} (\Psi) = 0 \leq 2 \cdot W(\Phi)$.

Next, if $\|V_L\|_{\ell^0} > 0$, but $\|V_\ell\|_{\ell^0} = 0$ for some $\ell \in \{1,\dots,L-1\}$,
then there is some $c \in \R^N$ such that $\|c\|_{\ell_0} \leq W(\Phi)$ and
$R_\varrho (\Phi) \equiv c$.
Indeed, $c = V_L c_0$ for some $c_0 \in \R^{N_{L-1}}$.
Besides, for any $A \in \R^{n \times k}$, $b \in \R^n$ and $x \in \R^k$, we have
$(A \, x + b)_j = b_j + \sum_{\ell=1}^k A_{j,\ell} x_\ell$,
which shows that
$\Indicator_{(A \, x + b)_j \neq 0}
 \leq \Indicator_{b_j \neq 0} + \sum_{\ell=1}^k \Indicator_{A_{j,\ell} \neq 0}$,
and hence
\vspace{-0.15cm}
\[
  \|A \, x + b\|_{\ell^0}
  = \sum_{j=1}^n \Indicator_{(A \, x + b)_j \neq 0}
  \leq \sum_{j=1}^n \Indicator_{b_j \neq 0}
       + \sum_{j=1}^n \,
           \sum_{\ell=1}^k
             \Indicator_{A_{j, \ell} \neq 0}
  =    \|A (\cdot) + b\|_{\ell^0} \, .
\]
Therefore, $\|c\|_{\ell^0} = \|V_L c_0\|_{\ell^0} \leq \|V_L\|_{\ell^0} \leq W(\Phi)$.

Given such a vector $c \in \R^N$ with $R_\varrho (\Phi) \equiv c$ and $\|c\|_{\ell^0} \leq W(\Phi)$,
we define $\Psi = (T_1,\dots,T_L)$, where $T_\ell = \lift{A_\ell} \circ B_\ell$,
and where $B_1,\dots,B_{L}$ and $A_1,\dots,A_{L-1}$ are defined as in the previous case,
and where
\(
  A_L :
  \R^{\FirstN{k_L} \times \FirstN{N_{L-1}}} \to \R^{\FirstN{N_L}},
  x \mapsto c .
\)
This is well-defined, since $N_L = N$, whence $c \in \R^N \cong \R^{\FirstN{N_L}}$.
It is not hard to see that $(R_\varrho (\Psi))_1 \equiv c \equiv R_\varrho (\Phi)$,
that $\Psi$ has the right number of filters and channels, and that
$W_{\mathrm{conv}} (\Psi) = \|A_L\|_{\ell^0} = \|c\|_{\ell^0} \leq W(\Phi)$.

\medskip{}

In the following, we can thus assume $\|V_\ell\|_{\ell^0} > 0$ for all $\ell \in \{1,\dots,L\}$.
Below, we will repeatedly make use of the following fact:
If $v \in \R^{\Gr}$, and if we define $v^\ast \in \R^{\Gr}$ by $v_g^\ast := v_{g^{-1}}$ for $g \in \Gr$, then
\begin{equation}
  (x \ast v^\ast)_1
  = \sum_{h \in \Gr} x_h \, v^\ast_{h^{-1}}
  = \sum_{h \in \Gr} x_h \, v_h
  = \langle x, v \rangle_{\R^{\Gr}}
  \qquad \forall \, x \in \R^{\Gr} \, .
  \label{eq:ScalarProductAsConvolution}
\end{equation}
Furthermore, $x \ast \delta_1 = x$ for all $x \in \R^{\Gr}$,
where $(\delta_1)_1 = 1$ and $(\delta_1)_g = 0$ for $g \in \Gr \setminus \{1\}$.
We remark that this way of expressing an inner product as a convolution
has also been used in \cite{UniConv} to analyse the expressivity of CNNs.

Recall that $\Phi = (V_1, \dots, V_L)$.
Since $V_1 : \R^{\FirstN{C_0} \times \Gr} \to \R^{\FirstN{N_1}}$ is affine-linear,
there are $v_j^i \in \R^{\Gr}$ and $b_j \in \R$
(for $j \in \FirstN{N_1}$ and $i \in \FirstN{C_0}$) such that
$V_1(\cdot) = b + V_1^{\mathrm{lin}}(\cdot)$, where $b = (b_j)_{j \in \FirstN{N_1}}$ and
\[
  V_1^{\mathrm{lin}} :
  \R^{\FirstN{C_0} \times \Gr} \to \R^{\FirstN{N_1}}, \quad
  (x_i)_{i \in \FirstN{C_0}} \mapsto \Big(
                                       \sum_{i=1}^{C_0}
                                         \langle x_i, v_j^i \rangle_{\R^{\Gr}}
                                     \Big)_{j \in \FirstN{N_1}} \, .
\]

We now define $T_1 := \lift{A_1} \circ B_1 \in \conv{C_0 \cdot N_1,C_0,N_1}$, where
\[
  B_1 : \R^{\FirstN{C_0} \times \Gr} \to \R^{\FirstN{N_1}
                                                \times \FirstN{C_0}
                                                \times \FirstN{C_0}
                                                \times \Gr}, \quad
  (x_i)_{i \in \FirstN{C_0}} \mapsto \big(
                                       x_{\iota} \ast (v^i_j)^\ast
                                     \big)_{j \in \FirstN{N_1},
                                               i,\iota \in \FirstN{C_0}}
\]
and $A_1 : \R^{\FirstN{N_1} \times \FirstN{C_0} \times \FirstN{C_0}} \to \R^{\FirstN{N_1}},
           y \mapsto b + A_1^{\mathrm{lin}} \, y$, where
\[
  A_1^{\mathrm{lin}}
  : \R^{\FirstN{N_1} \times \FirstN{C_0} \times \FirstN{C_0}} \to \R^{\FirstN{N_1}} \, , \quad
  (y_{j,i,\iota})_{j \in \FirstN{N_1}, i,\iota \in \FirstN{C_0}}
  \mapsto \Big(
            \sum_{i = 1}^{C_0}
              \Indicator_{v^i_{\ell} \neq 0} \cdot y_{\ell,i,i}
          \Big)_{\ell \in \FirstN{N_1}}
  \, .
\]
As a consequence of these definitions, we see for
$x = (x_{j,g})_{j \in \FirstN{C_0}, g \in \Gr} \in \R^{\FirstN{C_0} \times \Gr}$ and
$\ell \in \FirstN{N_1}$ that
\begin{align*}
  \big( (\lift{A_1} \circ B_1) x \big)_{\ell, 1}
  & = \Big(
        A_1 \big[
              (B_1 x)_{j, i, \iota, 1}
            \big]_{(j,i,\iota) \in \FirstN{N_1} \times \FirstN{C_0} \times \FirstN{C_0}}
      \Big)_\ell \\
   & = \Big(
        A_1 \big[
              (x_{\iota} \ast (v^i_j)^\ast)_1
            \big]_{(j,i,\iota) \in \FirstN{N_1} \times \FirstN{C_0} \times \FirstN{C_0}}
      \Big)_\ell \\
  ({\scriptstyle{\text{Def. of } A_1 \text{ and Eq. } \eqref{eq:ScalarProductAsConvolution}}})
  & = b_\ell
      + \sum_{i=1}^{C_0}
          \big[
            \langle x_i, v^i_\ell \rangle_{\R^{\Gr}}
            \cdot \Indicator_{v^i_\ell \neq 0}
          \big] 
     = b_\ell
      + \sum_{i=1}^{C_0}
          \langle x_i, v^i_\ell \rangle_{\R^{\Gr}}
    = (V_1 \, x)_\ell \, .
\end{align*}
In other words, with the projection map $\pi_1^{\FirstN{N_1}}$
defined in Equation~\eqref{eq:CoordinateProjectionDefinition}, we have
\begin{equation}
  \pi_1^{\FirstN{N_1}} \circ T_1
  = \pi_1^{\FirstN{N_1}} \circ \lift{A_1} \circ B_1
  = V_1 \, .
  \label{eq:FirstLayerProperty}
\end{equation}
Furthermore, we see directly from the definition of $A_1^{\mathrm{lin}}$ that
\[
  \big( A_1^{\mathrm{lin}} \, \delta_{j, i, \iota} \big)_\ell
   = \sum_{n=1}^{C_0}
      \big( \Indicator_{v^n_\ell \neq 0} \cdot \delta_{j, i, \iota} \big)_{\ell,n,n} 
   = \delta_{i, \iota} \cdot \Indicator_{v^{i}_{j} \neq 0} \cdot \delta_{j, \ell}
\]
for all $j, \ell \in \FirstN{N_1}$ and $i, \iota \in \FirstN{C_0}$.
Therefore,
\[
  \|A_1^{\mathrm{lin}}\|_{\ell^0}
  = \sum_{j=1}^{N_1}
      \sum_{\ell=1}^{N_1}
        \sum_{i,\iota \in \FirstN{C_0}} \!\!\!
          \Indicator_{( A_1^{\mathrm{lin}} \delta_{j,i,\iota} )_\ell \neq 0} 
   =\! \sum_{j=1}^{N_1} \,
         \sum_{i=1}^{C_0}
           \Indicator_{v^i_j \neq 0}
  \leq \sum_{j=1}^{N_1}
         \sum_{i=1}^{C_0}
           \|v^i_j\|_{\ell^0} \!
  = \|V_1^{\mathrm{lin}}\|_{\ell^0} ,
\]
and hence $\|A_1\|_{\ell^0}
           =    \|b\|_{\ell^0} + \|A_1^{\mathrm{lin}}\|_{\ell^0}
           \leq \|b\|_{\ell^0} + \|V_1^{\mathrm{lin}}\|_{\ell^0}
           =    \|V_1\|_{\ell^0}$.
Next, note
\[
  \|B_1\|_{\ell^0_{\mathrm{filter}}}
  = \sum_{j=1}^{N_1} \,
      \sum_{i=1}^{C_0}
        \|(v^i_j)^\ast\|_{\ell^0}
  = \|V_1^{\mathrm{lin}}\|_{\ell^0}
  \leq \|V_1\|_{\ell^0} \, ,
\]
which finally implies
\(
  \|T_1\|_{\ell^0_{\mathrm{conv}}}
  \leq \|A_1\|_{\ell^0} + \|B_1\|_{\ell^0_{\mathrm{filter}}}
  \leq 2 \cdot \|V_1\|_{\ell^0} \, .
\)

\medskip{}

Next, for $\ell \in \{2,\dots,L\}$, we define
$T_\ell := \lift{V_\ell} \circ B_\ell \in \conv{1,N_{\ell-1},N_\ell}$, where
\[
  B_\ell : \R^{\FirstN{N_{\ell - 1}} \times \Gr}
           \to \R^{\FirstN{N_{\ell - 1}} \times \Gr}, \quad
  x = (x_i)_{i \in \FirstN{N_{\ell - 1}}}
  \mapsto \big(
            x_i \ast \delta_1
          \big)_{i \in \FirstN{N_{\ell - 1}}} = x \, .
\]
Note because of $B_\ell \, x = x$ that $T_\ell = \lift{V_\ell}$.
Furthermore, since we excluded the case $\|V_\ell\|_{\ell^0} = 0$ at the beginning of the proof,
we have
$\|B_\ell\|_{\ell^0_{\mathrm{filter}}} = \|\delta_1\|_{\ell^0} = 1 \leq \|V_\ell\|_{\ell^0}$.
Therefore,
\[
  \|T_\ell\|_{\ell^0_{\mathrm{conv}}}
  \leq \|V_\ell\|_{\ell^0} + \|B_\ell\|_{\ell^0_{\mathrm{filter}}}
  \leq 2 \cdot \|V_\ell\|_{\ell^0} \, .
\]

We now define the CNN $\Psi := (T_1,\dots,T_L)$, noting that this network indeed
has the required number of channels and filters, and that
\[
  W_{\mathrm{conv}}(\Psi)
  = \sum_{\ell=1}^L \|T_\ell\|_{\ell^0_{\mathrm{conv}}}
  \leq 2 \sum_{\ell=1}^L \|V_\ell\|_{\ell^0}
  = 2 \cdot W(\Phi).
\]
By Proposition~\ref{prop:ConvNetsAreTranslationInvariant},
$R_\varrho (\Psi)$ is translation equivariant.
Since $F$ is also translation equivariant,
Equation~\eqref{eq:TranslationCovariantNorm} shows that we only need to verify
$[R_\varrho(\Psi)]_1 = R_\varrho(\Phi)$.
But this is easy to see: We saw above that $T_\ell = \lift{V_\ell}$ for
$\ell \in \{2,\dots,L\}$, which easily implies
$\pi_1^{\FirstN{N_\ell}} \circ T_\ell
 = V_\ell \circ \pi_1^{\FirstN{N_{\ell - 1}}}$.
Since the activation function $\varrho$
is applied component-wise, we conclude as desired that
\begin{align*}
  [R_\varrho (\Psi)]_1
  = \pi_1^{\FirstN{N_L}} \circ R_\varrho (\Psi)
  & = (V_L \circ \varrho \circ V_{L-1} \circ \cdots \circ \varrho \circ V_2)
      \circ \pi_1^{\FirstN{N_1}} \circ \varrho \circ T_1 \\
  ({\scriptstyle \text{Eq.~} \eqref{eq:FirstLayerProperty}
                 \text{ and } \pi_1^{\FirstN{N_1}} \circ \varrho
                              = \varrho \circ \pi_1^{\FirstN{N_1}}})
  & = (V_L \circ \varrho \circ V_{L-1} \circ \cdots \circ \varrho \circ V_2)
      \circ \varrho \circ V_1 \\
  & = R_\varrho (V_1,\dots,V_L) = R_\varrho (\Phi) \, .
  \qedhere
\end{align*}
\end{proof}

The following remark contains the (easier) converse of Theorem~\ref{thm:main},
showing that to each CNN $\Psi$ one can construct an associated FNN $\Phi$
such that ${R_\varrho (\Phi) = [ R_\varrho (\Psi) ]_1}$.

\begin{remark}\label{rem:TheRemarkForTheConverse}
Let $\Psi = (T_1, \dots, T_L)$ be a CNN with channel counts $(C_0, C_1, \dots, C_L)$
and filter counts $(k_1, \dots, k_L)$.
The \emph{FNN associated to $\Psi$}
is $\Phi^\Psi := (T_1, \dots, T_{L-1}, T_{L}^1)$,
where $T_L^1 := \pi_{1}^{\FirstN{C_L}} \circ T_L$.

\smallskip{}

The properties of the network $\Phi^\Psi$ are closely related to those of $\Psi$;
in particular, the following holds:
\begin{itemize}
  \item $R_\varrho (\Phi^\Psi) = [R_\varrho (\Psi)]_1$;
        \vspace{0.1cm}

  \item if $\| F - R_\varrho (\Psi) \|_{L^p(\Omega,\R^{\FirstN{C_L} \times \Gr})}
            \leq \eps \vphantom{\sum_j}$
        for some $p \in (0,\infty]$ and some measurable function
        $F: \R^{\FirstN{C_0} \times \Gr} \to \R^{\FirstN{C_L} \times \Gr}$, then
        ${\|(F)_1 - R_\varrho(\Phi^\Psi)\|_{L^p(\Omega, \R^{C_L})} \leq \eps}$;
        \vspace{0.1cm}

  \item $\Phi^\Psi$ has architecture
        $\arch{\Phi^\Psi} = (|\Gr| \cdot C_0, |\Gr| \cdot C_1, \dots, |\Gr|\cdot C_{L-1}, C_L)$,
        and hence $N(\Phi^{\Psi}) \leq |\Gr| \cdot C(\Psi)$; and
        \vspace{0.1cm}

  \item $W(\Phi^\Psi) \leq |\Gr|^2 \cdot W_{\mathrm{conv}}(\Psi)$.
\end{itemize}
The very last property is a consequence of Equation~\eqref{eq:WeightRelationOrdinaryConvolutional},
combined with the estimate $\|\pi_1^{\FirstN{C_L}} \circ T_L\|_{\ell^0} \leq \|T_L\|_{\ell^0}$.
\end{remark}

\subsection{Equivalence of approximation rates}\label{sec:EquivalenceOfRates}

For $C_0, N\in \N$ and a given function class
$\mathcal{C} \subset \{ F :\R^{\FirstN{C_0} \times \Gr} \to \R^{\FirstN{N}}\}$, we call
\[
  \equi{\mathcal{C}}
  := \left\{
      G :\R^{\FirstN{C_0} \times \Gr} \to \R^{\FirstN{N} \times \Gr}
      \,\middle|\,
      G \text{ translation equivariant and }
      (G)_1 \in \mathcal{C}
    \right\}
\]
the \emph{equivariant function class associated to $\mathcal{C}$}.

In combination, Theorem~\ref{thm:main} and Remark~\ref{rem:TheRemarkForTheConverse} imply that for
any function class $\mathcal{C}\subset \{ F :\R^{\FirstN{C_0} \times \Gr} \to \R^{\FirstN{N}}\}$
consisting of measurable functions,
the approximation rate of FNNs in terms of the number of neurons (or number of weights) is
equivalent (up to multiplicative constants that depend only on $|\Gr|$ and $p$)
to the approximation rate of CNNs in terms of the number of channels
(or number of weights) for the associated equivariant function class $\equi{\mathcal{C}}$.

\medskip

As a result, all upper and lower approximation bounds established
for FNNs (such as for instance
\cite{Barron1993,
      bolcskei2017optimal,
      Cybenko1989,
      Maiorov1999LowerBounds,
      Mhaskar:1996:NNO:1362203.1362213,
      Mhaskar1993,
      PetV2018OptApproxReLU,
      YAROTSKY2017103})
directly imply the same bounds for CNNs
for the corresponding translation equivariant function classes.
As a concrete example of this, we now state the approximation theorem for CNNs
that corresponds to \cite[Theorem 3.1]{PetV2018OptApproxReLU}.

\begin{proposition}\label{prop:OurStuffCNNVersion}
  Let $\Gr$ be a finite group, let $C_0 \in \N$ and $\beta, p \in (0,\infty)$.
  There exists a ${c = c(C_0,|\Gr|,\beta,p) > 0}$ such that for every
  $\eps \in (0, 1/2)$ and every translation equivariant function
  ${F : \R^{\FirstN{C_0} \times \Gr} \to \R^{\Gr}}$ such that
  ${\|(F)_1\|_{C^\beta ([-1/2, 1/2]^{\FirstN{C_0} \times \Gr})} \leq 1}$,
  there is a CNN $\Psi_\eps^F$ with at most
  ${(2 + \lceil \log_2 \beta \rceil_+) \cdot \big( 11 + \beta/(C_0 |\Gr|) \big)}$
  layers and such that
  \[
    W_{\mathrm{conv}} (\Psi_\eps^F) \leq c \cdot \eps^{-C_0 |\Gr|/\beta}
    \qquad \text{and} \qquad
    \|
      R_\varrho (\Psi_\eps^F)
      - F
    \|_{L^p\left(\left[-\frac{1}{2}, \frac{1}{2}\right]^{\FirstN{C_0} \times \Gr}; \R^{\Gr}\right)}
    \leq \eps \, .
  \]
  Here, $\varrho : \R \to \R, x \mapsto x_+ := \max \{0,x\}$ is the ReLU.
\end{proposition}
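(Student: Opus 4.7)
The plan is to reduce the statement to the known FNN approximation bound for $C^\beta$ functions (\cite[Theorem 3.1]{PetV2018OptApproxReLU}) and then convert the resulting FNN into a CNN by a single invocation of Theorem~\ref{thm:main}. Set $D := d \cdot |\Gr|$, identify $\R^{\FirstN{d} \times \Gr}$ with $\R^D$, and let $f := (F)_1 : \R^D \to \R$; by hypothesis $\|f\|_{C^\beta([-1/2,1/2]^D)} \leq 1$. The cited FNN result then supplies, for any target tolerance $\widetilde\eps \in (0,1/2)$, a ReLU-FNN $\Phi$ with architecture $(D, N_1, \dots, N_{L-1}, 1)$, with $L \leq (2 + \lceil \log_2 \beta \rceil) \cdot (11 + \beta/D)$ layers, with $W(\Phi) \leq c_1 \cdot \widetilde\eps^{-D/\beta}$ for some $c_1 = c_1(d,|\Gr|,\beta,p)$, and with $\|f - R_\varrho(\Phi)\|_{L^p([-1/2,1/2]^D)} \leq \widetilde\eps$.

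Next, I would check that the domain $\Omega := [-1/2,1/2]^{\FirstN{d} \times \Gr}$ is $\Gr$-invariant: it is a product over $(i,g) \in \FirstN{d} \times \Gr$ of sets that depend only on the first index $i$, and the shift operator $\vect{S_g}{\FirstN{d}}$ only permutes the $\Gr$-coordinate, so $\vect{S_g}{\FirstN{d}}(\Omega) = \Omega$ for every $g \in \Gr$. Thus the hypotheses of Theorem~\ref{thm:main} are satisfied with $C_0 = d$, $N = 1$, the translation equivariant $F$, and the FNN $\Phi$; the theorem yields a CNN $\Psi_\eps^F$ with channel counts $(d, N_1, \dots, N_{L-1}, 1)$, the same number of layers $L$, weight bound $W_{\mathrm{conv}}(\Psi_\eps^F) \leq 2 \, W(\Phi)$, and
\[
  \| F - R_\varrho(\Psi_\eps^F) \|_{L^p(\Omega,\R^{\Gr})} \leq |\Gr|^{1/p} \cdot \widetilde\eps .
\]

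Finally, choose $\widetilde\eps := |\Gr|^{-1/p} \cdot \eps$ (still in $(0,1/2)$, after possibly shrinking the admissible range of $\eps$ by a constant that can be absorbed into $c$). The approximation error then equals $\eps$, and the weight bound becomes
\[
  W_{\mathrm{conv}}(\Psi_\eps^F)
  \leq 2 c_1 \cdot |\Gr|^{D/(p\beta)} \cdot \eps^{-D/\beta}
  =   c \cdot \eps^{-d |\Gr|/\beta}
\]
for a constant $c = c(d,|\Gr|,\beta,p)$, and the layer count is exactly the one stated in the proposition since $D = d|\Gr|$.

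I do not expect a real obstacle: once Theorem~\ref{thm:main} is in hand, this is a mechanical composition with a black-box FNN approximation bound. The only items needing verification are the $\Gr$-invariance of the cube $\Omega$ and the harmless rescaling by $|\Gr|^{1/p}$ to absorb the loss factor from \eqref{eq:TranslationCovariantNorm} into the final constant $c$.
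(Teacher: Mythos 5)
Your proposal is correct and follows essentially the same route as the paper's own proof: apply \cite[Theorem 3.1]{PetV2018OptApproxReLU} to $(F)_1$ with tolerance $|\Gr|^{-1/p}\eps$ and then invoke Theorem~\ref{thm:main}, absorbing the factor $|\Gr|^{D/(\beta p)}$ into the constant $c$. (Your worry about shrinking the range of $\eps$ is unnecessary, since $|\Gr|^{-1/p}\eps \leq \eps < 1/2$ automatically.)
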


\begin{rem*}
  1) For the precise definition of $\|f\|_{C^{\beta}}$, we refer to
  \cite[Section 3.1]{PetV2018OptApproxReLU}.

  \smallskip{}

  2) Note that the exponent $C_0 |\Gr| / \beta$ is of the form ``$\text{input dimension} / \text{smoothness}$'',
  as usual for such approximation results.

  \smallskip{}

  3) Under a certain \emph{encodability assumption} (see \cite[Section 4]{PetV2018OptApproxReLU})
  on the weights of the approximating networks,
  one can show using Remark~\ref{rem:TheRemarkForTheConverse}
  that the approximation rate from above is \emph{optimal}
  {up to a factor that is logarithmic in $1/\eps$}.
  Since this encodability condition is quite technical,
  however, we do not state this result in detail.
\end{rem*}

\begin{proof}
  Let ${f := (F)_1 : \R^{\FirstN{C_0} \times \Gr} \to \R}$
  and $L := (2 + \lceil \log_2 \beta \rceil_+) \cdot (11 + \beta / D)$,
  where $D := C_0 \cdot |\Gr|$.
  With the constant $c_0 = c_0 (D,\beta) > 0$ provided by
  \cite[Theorem 3.1]{PetV2018OptApproxReLU}, we see that there is an FNN
  $\Phi_\eps^f$ with at most $L$ layers, and such that
  ${W(\Phi_\eps^f) \leq c_0 \cdot (|\Gr|^{-1/p} \cdot \eps)^{-D / \beta}}$ and
  \(
    \|
      R_\varrho(\Phi_\eps^f) - f
    \|_{L^p ([-{1}/{2}, {1}/{2}]^{\FirstN{C_0} \times \Gr})}
    \leq |\Gr|^{-1/p} \cdot \eps
  \).

  Thus, setting $c := 2 c_0 \cdot |\Gr|^{D / (\beta p)}$, Theorem~\ref{thm:main} yields a CNN
  $\Psi_\eps^F$ satisfying all the stated properties.
\end{proof}

For the sake of brevity, we refrain from explicitly stating the CNN versions of the results in
\cite{Barron1993,
      bolcskei2017optimal,
      Cybenko1989,
      Maiorov1999LowerBounds,
      Mhaskar:1996:NNO:1362203.1362213,
      Mhaskar1993,
      YAROTSKY2017103}.

{Finally, we remark that Theorem~\ref{thm:main} yields a new proof of the
\emph{universal approximation theorem for CNNs}
that was originally derived by Yarotsky \cite{YarotskyWasFirstAgain}.
This theorem states that if $F : \R^{\FirstN{C_0} \times \Gr} \to \R^{\FirstN{N} \times \Gr}$ is
continuous and translation equivariant, and if $\Omega \subset \R^{\FirstN{C_0} \times \Gr}$
is $\Gr$-invariant and compact, then $F$ can be uniformly approximated on $\Omega$ by
$\varrho$-realisations of CNNs of any fixed depth $L \geq 2$,
as long as $\varrho$ is continuous, but not a polynomial.}

\section*{Acknowledgments}
P.P.~is grateful for the hospitality of the
Katholische Universität Eichst\"att--Ingolstadt
where this problem was formulated and solved during his visit. P.P is supported by a DFG research fellowship.

\bibliographystyle{abbrv}
\bibliography{ref}

\end{document}